\newtheorem{theorem}{Theorem}[section]
\newtheorem{lemma}[theorem]{Lemma}
\theoremstyle{definition}
\newtheorem{remark}[theorem]{Remark}
\newcommand{\Q}{\mathbb{Q}}
\newcommand{\F}{\mathbb{F}}
\newcommand{\Fpt}{\F_{p}(\!(t)\!)}
\newcommand{\Fptt}{\F_{p}\llbracket t\rrbracket}
\title{On the nilpotency of locally pro-$p$ contraction groups}
\author{Alonso Beaumont}
\date{January 2025}
\begin{document}

\maketitle

\begin{abstract}
    H. Glöckner and G. A. Willis have recently shown \cite{GW21} that locally pro-$p$ contraction groups are nilpotent. The proof hinges on a fixed-point result \cite[Theorem B]{GW21}: if the local field $\Fpt$ acts on its $d$-th power $\Fpt^{d}$ additively, continuously, and in an appropriately equivariant manner, then the action has a non-zero fixed point. We provide a short proof of this theorem.
\end{abstract}

\section{Introduction}

A locally compact topological group $G$ is said to be a \textit{contraction group} if it has a continuous automorphism $\alpha$ whose forward orbits converge to the identity element $e$:
\[\forall g\in G, \quad \alpha^{\circ n}(g)\underset{n\rightarrow\infty}{\longrightarrow} e.\]
This dynamical property has interesting connexions with the purely group-theoretic notion of nilpotency. This relationship is best illustrated in the case where $G$ is a Lie group: a contracting automorphism on $G$ provides a positive gradation for its Lie algebra $\mathfrak{g}$, proving in particular that $\mathfrak{g}$ is nilpotent (see \cite{S86}). A thorough study of totally disconnected locally compact (tdlc) contraction groups is carried out in \cite{GW10}. One of its main structural results asserts that any tdlc contraction group is a direct product of a torsion-free contraction group, which is always nilpotent, and a torsion contraction group \cite[Theorem B]{GW10}. The remaining questions concerning the torsion part were treated in \cite{GW21}. In order to accommodate for torsion, an additional hypothesis is needed: a tdlc group is called \textit{locally pro-$p$} if it contains an open pro-$p$ subgroup. For example, analytic groups over the field $\Q_{p}$ of $p$-adic numbers, or the field $\Fpt$ of Laurent series with coefficients in $\F_{p}$, are locally pro-$p$. In \cite{GW21} it is shown that locally pro-$p$ contraction groups are nilpotent. The major part of the proof consists in establishing the following fixed point result:

\begin{theorem}[{\cite[Theorem B]{GW21}}]
    \label{theo}
    Let $d\geq1$ and consider a continuous action $(\Fpt,+)\curvearrowright (\Fpt^{d},+)$ by group automorphisms. If
    \[\forall x\in\Fpt,\;\forall u\in \Fpt^{d},\; (tx)\cdot (tu)=t(x\cdot u)\]
    then the action has a non-zero fixed point.
\end{theorem}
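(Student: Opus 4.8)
The plan is to recast the action as a family of commuting operators and reduce the existence of a fixed point to a statement about a single unipotent map together with the contraction by $t$. Write $A_x\colon \Fpt^d\to\Fpt^d$ for the additive automorphism $u\mapsto x\cdot u$, and let $M$ denote multiplication by $t$ on $\Fpt^d$. Since $(\Fpt,+)$ is abelian the operators $A_x$ commute, and since it has exponent $p$ each satisfies $(A_x-1)^p=A_x^p-1=0$; thus the $A_x$ are commuting unipotent $\F_p$-linear automorphisms. The equivariance hypothesis reads exactly $A_{tx}=MA_xM^{-1}$, so that passing from $x$ to $tx$ is conjugation by the \emph{contraction} $M$ (note $t^nu\to 0$). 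A vector $u$ is fixed by the whole action iff it is fixed by $A_x$ for all $x$ in the dense subgroup $\F_p[t,t^{-1}]$, hence (the $A_x$ being commuting and $\F_p$-linear) iff $A_{t^n}u=u$ for every $n\in\mathbb{Z}$. Writing $P:=\mathrm{Fix}(A_1)$ and using $\mathrm{Fix}(A_{t^n})=M^nP$, the fixed set is $W=\bigcap_{n\in\mathbb{Z}}M^nP$; equivalently, as $u$ is fixed iff $t^mu\in P$ for all $m$, $W$ is the largest $\Fpt$-subspace contained in $P$. So the theorem amounts to showing that $\mathrm{Fix}(A_1)$ contains a nonzero $\Fpt$-line. (One may first replace $A_1$ by its truncated logarithm $D=\sum_{1\le k<p}\tfrac{(-1)^{k-1}}{k}(A_1-1)^k$, a single nilpotent operator with $D^p=0$ and $\ker D=\mathrm{Fix}(A_1)$; the characteristic-$p$ identity $(a+b)^p=a^p+b^p$ shows $x\mapsto\log A_x$ is additive, linearising the dependence on $x$, though this is not strictly necessary.)

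Next I would establish nonvanishing at every finite level. For each $k$, the subgroup $G_k\le\mathrm{Aut}(\Fpt^d)$ generated by the $A_{t^n}$ with $|n|\le k$ is a finite abelian $p$-group, and $\bigcap_{|n|\le k}M^nP$ is precisely its fixed space $V^{G_k}$. Since $\F_p[G_k]$ is a local Artinian ring with nilpotent augmentation ideal $I$, every nonzero $\F_p[G_k]$-module has nonzero socle: if $0\ne v$ and $j$ is largest with $I^{j}v\ne 0$, then $0\ne I^{j}v\subseteq V^{G_k}$. Hence every finite intersection $\bigcap_{|n|\le k}M^nP$ is nonzero, \emph{without} any use of finite-dimensionality.

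It remains to pass from all finite intersections being nonzero to $W\ne 0$, and this is the main obstacle: a descending chain of nonzero closed $\F_p$-subspaces may have zero intersection (e.g.\ $\bigcap_m M^m\Fptt^d=0$), so the algebra above is insufficient and the topology must be used decisively. The plan is a compactness/normalisation argument. Choose $0\ne w_k\in\bigcap_{|n|\le k}M^nP$, rescale by a power of $M$ so that $u_k$ has $t$-adic order $0$, and pass to a subsequence along which the (finitely many possible) leading coefficients in $\F_p^d$ are constant; then $u_k\to u$ in the compact lattice $\Fptt^d$ with $u\ne 0$. One wants $u\in M^nP$ for every $n$, which holds as soon as the renormalising exponents stay bounded, so that the windows of constraints satisfied by the $u_k$ exhaust $\mathbb{Z}$.

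The crux is therefore to prevent the normalised fixed vectors from escaping to arbitrarily high order, i.e.\ to bound those exponents, and this is exactly where the contraction enters essentially: the convergence $A_{t^n}\to 1$ as $n\to+\infty$ makes the positive constraints weak, while $A_{t^n}=M^nA_1M^{-n}$, the commutativity $[A_1,A_{t^n}]=1$, and the finiteness $\dim_{\Fpt}\Fpt^d=d$ should confine the low-order behaviour. Concretely I expect to exploit that the ``negative'' fixed space $P^-=\bigcap_{n\le 0}M^nP$ satisfies $MP^-\subseteq P^-$ and $W=\bigcap_{n\ge 0}M^nP^-$, and to argue that the finite $\Fpt$-rank $d$ forces a nonzero $M$-stable (hence $\Fpt$-rational) direction to persist rather than being contracted away. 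Ruling out this order-escape and extracting an honest $\Fpt$-line is the heart of the matter and the step I expect to demand the most care.
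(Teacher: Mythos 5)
Your reduction is sound as far as it goes: the fixed set of the action is indeed $W=\bigcap_{n\in\mathbb{Z}}M^nP$ with $P=\mathrm{Fix}(A_1)$, and the socle argument over the local ring $\F_p[G_k]$ correctly shows that every finite intersection $\bigcap_{|n|\le k}M^nP$ is nonzero. But the proposal stops exactly where the theorem lives, and you say so yourself: ruling out ``order-escape'' of the normalised vectors is ``the heart of the matter,'' and for that step you offer only expectations (``I expect to exploit\dots'', ``should confine\dots''), not an argument. The gap is genuine and not routine. Your normalised vector $u_k=t^{m_k}w_k$ is only known to lie in $\bigcap_{n\in I_k}M^nP$ for the shifted window $I_k=[m_k-k,\,m_k+k]$, and the socle argument gives no control whatsoever over the $t$-adic order of a nonzero element of $\bigcap_{|n|\le k}M^nP$: if $m_k$ drifts at linear speed, the windows $I_k$ need not contain any fixed integer infinitely often, and then the accumulation point $u$ is not known to lie in even a single $M^nP$. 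Nothing in your outline excludes this. Note also that up to this point you have used continuity of the action only through the density reduction to the operators $A_{t^n}$; that is too little, since uniformity (equicontinuity-type) information is exactly what a drift bound would have to come from.

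The paper overcomes this obstacle with two further ideas, neither of which appears in your plan. First, joint continuity is exploited via Ascoli's theorem: the compact groups $G_\ell=\varphi(t^{-\ell}\Fptt)$ are equicontinuous, which produces a maximal $G$-invariant subgroup $M\subset\Fptt^d$, shown to be non-trivial by a compactness argument along $M_\ell\cap(B\setminus tB)$, and open in the $G$-invariant $\Fpt$-subspace $A=\bigcup_n t^{-n}M$. Second, a representation-theoretic lemma (for an elementary abelian $p$-group, a locally finite infinite-dimensional $\F_p$-module $V$ has $V^G$ or $(V/V^G)^G$ infinite-dimensional) is applied to $V=A/M$ to produce \emph{approximate} fixed points of the \emph{full} group of arbitrarily high order: elements $v_n$ fixed modulo $M$ with $v_n\notin t^{-n}M$. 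After normalising, $w_n=t^{k_n}v_n\in M\setminus tM$ with $k_n>n$, the equivariance gives $\varphi(x)(w_n)-w_n\in t^{k_n}M$, an error that shrinks automatically as $n\to\infty$; hence any accumulation point of $(w_n)$ in the compact set $M\setminus tM$ is an exact nonzero fixed point, and no drift bound is ever needed, because the fixed compact window $M\setminus tM$ and the vanishing error replace it. In short: you normalise exact fixed vectors of finite subgroups, whose position cannot be controlled; the paper normalises approximate fixed vectors of the whole group, whose position is controlled by construction. Your socle lemma is only the qualitative shadow of the paper's finite-group estimate $\dim V^G\ge \dim V/p^r$, and it is the quantitative version, fed through the subgroup $M$, that makes the limiting argument close. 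To complete your proof you would need to supply a mechanism of comparable strength.
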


Glöckner and Willis's proof of this result involves representing endomorphisms of the abelian group $\Fpt^d$ as infinite block matrices satisfying certain conditions. We refer the reader to Section 3 of \cite{GW21} for details. The purpose of this note is to provide an alternative, shorter proof of this result. It has two main ingredients: a study of $\F_{p}$-representations of elementary abelian $p$-groups, and an application of Ascoli's theorem to the automorphism group $\mathrm{Aut}(\Fpt^{d})$.

\paragraph{Acknowledgements.} I'd like to thank David J. Benson for his contribution to Lemma \ref{lem}, as well as Adrien Le Boudec and Helge Glöckner for their helpful comments.

\section{Proof of the Theorem}

We begin with a lemma. A linear representation $V$ of a group $G$ is said to be \textit{locally finite} if its finitely generated subrepresentations are finite-dimensional. We denote by $V^{G}$ the set of $G$-fixed points in $V$.

\begin{lemma}
    \label{lem}
    Let $G$ be an elementary abelian $p$-group, and $V$ a locally finite representation of $G$ over $\F_{p}$. If $V$ is infinite-dimensional, then either
    \[V^{G} \quad or \quad \left(V/V^{G}\right)^{G}\]
    is infinite-dimensional.
\end{lemma}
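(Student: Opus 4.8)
The plan is to recast the statement in terms of the socle series and then dualise. Since $G$ is a $p$-group and the field has characteristic $p$, the only simple $\F_{p}[G]$-module is the trivial one, so $V^{G}$ is exactly the socle $\mathrm{soc}(V)$; more generally the socle series is $S_{k}=\mathrm{ann}_{V}(\mathfrak{m}^{k})$, where $\mathfrak{m}$ is the augmentation ideal of $\F_{p}[G]$, and in this language $V^{G}=S_{1}$ and $(V/V^{G})^{G}=S_{2}/S_{1}$. A first useful observation is that local finiteness forces $\bigcup_{k}S_{k}=V$: any $v$ generates a finite-dimensional, hence finite-length, subrepresentation, which therefore lies in some $S_{k}$. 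I would prove the contrapositive: assuming $S_{1}$ and $S_{2}/S_{1}$ are both finite-dimensional, I would show that the socle series terminates after finitely many steps with finite-dimensional layers, whence $V$ is finite-dimensional.

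The mechanism I would use to control the higher layers is duality. Passing to the pseudocompact (profinite-dimensional) dual $W=V^{\vee}$ turns the socle filtration of $V$ into the $\mathfrak{m}$-adic filtration of $W$: one has $W/\mathfrak{m}W\cong (V^{G})^{*}$ and $\mathfrak{m}W/\mathfrak{m}^{2}W\cong (S_{2}/S_{1})^{*}$, both finite-dimensional, while $\bigcup_{k}S_{k}=V$ dualises to $\bigcap_{k}\mathfrak{m}^{k}W=0$. The key point, and the place where the elementary abelian structure really enters, is a finite-support trick: since $\mathfrak{m}W/\mathfrak{m}^{2}W$ is finite-dimensional, each of its finitely many basis vectors is a finite sum of elements $x_{i}w$, so only finitely many generators $x_{i}$ (say $i$ in a finite set $F$) are needed, giving $\mathfrak{m}W=JW+\mathfrak{m}^{2}W$ with $J=(x_{i}:i\in F)$. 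Multiplying by $\mathfrak{m}$ and absorbing, I would propagate this to $\mathfrak{m}^{k}W=J^{k}W+\mathfrak{m}^{k+1}W$ for all $k$. Because $G$ is elementary abelian we have $x_{i}^{p}=0$, and with only $|F|$ such generators the ideal $J$ is nilpotent: $J^{N}=0$ for $N=|F|(p-1)+1$. Hence $\mathfrak{m}^{N}W=\mathfrak{m}^{N+1}W=\cdots$, and intersecting with $\bigcap_{k}\mathfrak{m}^{k}W=0$ gives $\mathfrak{m}^{N}W=0$.

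Once the $\mathfrak{m}$-adic filtration of $W$ stops, finiteness is immediate: the finitely many lifts of a basis of $W/\mathfrak{m}W$ generate $W$, each graded piece $\mathfrak{m}^{k}W/\mathfrak{m}^{k+1}W$ is spanned by the finitely many monomials of degree $k$ in the $x_{i}$ ($i\in F$) applied to these generators, and the filtration has length $N$; so $W$, and therefore $V$, is finite-dimensional. I expect the main obstacle to be setting up the duality cleanly for locally finite modules—checking that $V\mapsto V^{\vee}$ is exact and really does interchange the socle and $\mathfrak{m}$-adic filtrations, and that $\bigcap_{k}\mathfrak{m}^{k}W=0$—since this is precisely what converts the hypotheses about invariants into a tractable statement about generators. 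The representation-theoretic heart is the finite-support plus nilpotency step: it is exactly the failure of $\F_{p}[G]$ to be Noetherian for infinite-rank $G$ that makes the result nontrivial, and the fact that finiteness of only the first two socle layers already pins down a finite-rank subgroup $\langle g_{i}:i\in F\rangle$ driving the entire socle series is the crux I would need to get right.
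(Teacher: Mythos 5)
Your plan is correct, and it is a genuinely different proof from the one in the paper. The paper argues in two cases: for finite $G$ it shows directly that $\dim V^{G}\geq \dim V_{n}/p^{r}$ by a rank--nullity count on the filtration $\ker(g_{1}-\mathrm{id})^{i}$, and for infinite $G$ it notes that the preimage of $(V/V^{G})^{G}$ (your $S_{2}$) is finite-dimensional, so the action on it factors through a finite quotient, splits $G=G_{0}\times G_{1}$ with $G_{1}$ acting trivially there, and reaches a contradiction using unipotence together with the finite case. Your finite-support observation --- that finite-dimensionality of the second socle layer pins down a finite set $F$ of generators --- is exactly the dual shadow of that ``factors through a finite quotient'' step; the two proofs diverge in what they do with it. Where the paper falls back on the finite case, you use commutativity and $x_{i}^{p}=0$ to make $J$ nilpotent and combine this with separatedness of the dual filtration, obtaining the stronger quantitative conclusion $\mathfrak{m}^{N}W=0$, i.e.\ $V=\mathrm{ann}_{V}(\mathfrak{m}^{N})$ with all socle layers finite-dimensional, uniformly and with no finite/infinite dichotomy (and your argument covers abelian groups of finite exponent verbatim). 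The paper's route buys elementarity --- no topological duality; yours buys uniformity and an effective bound on the socle length.

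The one point that genuinely needs care is the one you flagged. Pseudocompact duality matches the socle filtration of $V$ with the filtration of $W$ by the \emph{closures} $\overline{\mathfrak{m}^{k}W}=S_{k}^{\perp}$, not by the algebraic submodules $\mathfrak{m}^{k}W$: for infinite $G$ the span $\mathfrak{m}W$ consists of finite sums and is typically only dense in $S_{1}^{\perp}$. Two compactness remarks rescue your argument. First, under your standing hypotheses $\overline{\mathfrak{m}W}/\overline{\mathfrak{m}^{2}W}\cong(S_{2}/S_{1})^{*}$ is finite-dimensional, so $\overline{\mathfrak{m}^{2}W}$ is a closed subgroup of finite index in the compact group $\overline{\mathfrak{m}W}$, hence open; therefore the dense subspace $\mathfrak{m}W$ still surjects onto $\overline{\mathfrak{m}W}/\overline{\mathfrak{m}^{2}W}$, and your lifts $b_{j}$ may indeed be chosen as honest finite sums $\sum x_{i}w$, giving $\overline{\mathfrak{m}W}=JW+\overline{\mathfrak{m}^{2}W}$. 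Second, each $J^{k}W$ is compact (a finite sum of images of the compact $W$ under the monomials of degree $k$ in the $x_{i}$, $i\in F$), so the subgroups $J^{k}W+\overline{\mathfrak{m}^{k+1}W}$ are closed and the propagation $\overline{\mathfrak{m}^{k}W}=J^{k}W+\overline{\mathfrak{m}^{k+1}W}$ goes through by induction; nilpotency of $J$ and $\bigcap_{k}\overline{\mathfrak{m}^{k}W}=\bigl(\bigcup_{k}S_{k}\bigr)^{\perp}=0$ then give $\overline{\mathfrak{m}^{N}W}=0$ as you intended, and the final dimension count is unaffected. With these adjustments your outline compiles into a complete proof.
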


\begin{proof}
    First, suppose that $G$ is finite. Then $\F_{p}[G]$ is a finite-dimensional $\F_{p}$-algebra: if the $\F_{p}[G]$-module $V^{G}$ were finite then its essential extension $V$ would also be finite (see for instance \cite{RZ58}), so $V^{G}$ is infinite-dimensional. In order to give a self-contained presentation, we will provide a more direct proof. Write $G=\langle g_{1},\cdots, g_{r}\rangle$. By the local finiteness of $V$, there is a strictly increasing family $(V_{n})_{n\geq0}$ of finite subrepresentations. For all $n\geq0$, write $d_{n}=\dim V_{n}$ and consider the sequence
    \[\forall i\in\{0,\cdots, p\},\quad d_{n}(i)=\dim\left(\ker(g_{1}-\mathrm{id})^{i}\cap V_{n}\right).\]
    By a standard application of the rank-nullity theorem, $(d_{n}(i+1)-d_{n}(i))_{0\leq i< p}$ is a decreasing sequence. Moreover, $d_{n}(p)=d_{n}$ since $(g_{1}-\mathrm{id})^{p}=g_{1}^{p}-\mathrm{id}=0$. We deduce that $d_{n}(1)=\dim V_{n}^{\langle g_{1}\rangle}\geq d_{n}/p$. The elements $g_{1}$ and $g_{2}$ commute, so we can consider the action of $\langle g_{2}\rangle$ on $V^{\langle g_{1}\rangle}$. By repeating the argument above we obtain $\dim V_{n}^{\langle g_{1},g_{2}\rangle}\geq d_{n}/p^{2}$ and by a finite induction,
    \[\dim V^{G}\geq\dim V_{n}^{G}\geq d_{n}/p^{r}\underset{n\rightarrow+\infty}{\longrightarrow}+\infty.\]

    Now suppose that $G$ is infinite, and that both $V^{G}$ and $(V/V^{G})^{G}$ are finite-dimensional. Then the preimage $V'$ of $(V/V^{G})^{G}$ in $V$ is finite-dimensional: its $G$-action factors through a finite quotient. Since $G$ is elementary abelian, this quotient is a direct summand. We can therefore write $G=G_{0}\times G_{1}$, where $G_{0}$ is finite and $G_{1}$ acts trivially on $V'$. In particular, $G_{1}$ acts trivially on $V'\cap V^{G_{0}}$ so $V'\cap V^{G_{0}}\subset (V^{G_{0}})^{G_{1}}=V^{G}$. Hence,
    \[\left(V'\cap V^{G_{0}}\right)/V^{G}=\left(V^{G_{0}}/V^{G}\right)^{G}=0.\]
    Suppose $V^{G_{0}}/V^{G}$ is non-trivial. Since $V^{G_{0}}/V^{G}$ is locally finite, it would have a non-trivial finite-dimensional subrepresentation, and hence a non-zero $G$-fixed point, because $G$ acts as an abelian unipotent group. We deduce that
    $V^{G_{0}}/V^{G}=0$. This is absurd since $G_{0}$ is finite and thus $V^{G_{0}}$ is infinite-dimensional.
\end{proof}

Let $d\geq1$. We consider $\Fpt^{d}$ as an additive tdlc group, equipped with a continuous automorphism $t\cdot\mathrm{id}:u=(u_{1},\cdots,u_{d})\mapsto tu=(tu_{1},\cdots,tu_{d})$. The group $\mathrm{Aut}(\Fpt^{d})$ of continuous automorphisms is endowed with the compact-open topology. In particular, a continuous action $G\curvearrowright \Fpt^{d}$ by group automorphisms induces a continuous morphism $\varphi:G\rightarrow\mathrm{Aut}(\Fpt^{d})$. We restate Theorem \ref{theo} in the following manner:

\begin{theorem}
    \label{theor}
    A continuous morphism $\varphi:\Fpt\rightarrow\mathrm{Aut}(\Fpt^{d})$ satisfying
    \begin{equation}
    \label{1}
        \forall x\in\Fpt,\;\forall u\in \Fpt^{d},\;\varphi(tx)(u)=t\varphi(x)(t^{-1}u)
    \end{equation}
    has a non-zero fixed point: there exists $u\in\Fpt^{d}\backslash\{0\}$ such that $\varphi(x)(u)=u$ for all $x\in\Fpt$.
\end{theorem}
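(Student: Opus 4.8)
The plan is to apply Lemma \ref{lem} to the finite subgroups of $(\Fpt,+)$ and then pass to the limit by a compactness argument. Throughout I write $V=\Fpt^{d}$, $G=(\Fpt,+)$, $L=\Fptt^{d}$ for the standard compact open lattice, and $T=t\cdot\mathrm{id}$ for the dilation, so that the equivariance relation \eqref{1} reads $\varphi(tx)=T\varphi(x)T^{-1}$. Three preliminary observations set the stage. First, since $\Fpt$ has characteristic $p$, every $x\in G$ has additive order $p$, so each $\varphi(x)$ is an $\F_{p}$-linear automorphism with $(\varphi(x)-\mathrm{id})^{p}=\varphi(x)^{p}-\mathrm{id}=0$; thus $\varphi(G)$ is a group of commuting unipotent operators, exactly the situation governed by Lemma \ref{lem}. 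Second, $V$ is infinite-dimensional over $\F_{p}$. Third, the subring $\F_{p}[t,t^{-1}]$ is dense in $\Fpt$, and for fixed $u$ the orbit map $x\mapsto\varphi(x)(u)$ is continuous, so the fixed-point set $V^{G}$ equals $V^{\F_{p}[t,t^{-1}]}$. Writing $D_{n}$ for the finite subgroup spanned by $t^{-n},\dots,t^{n}$, this gives the key reduction
\[V^{G}=\bigcap_{n\geq1}V^{D_{n}},\]
a nested intersection of closed subspaces, and it is this intersection that must be shown to be non-zero.

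For each individual $n$, the group $D_{n}$ is finite elementary abelian, so $V$ is automatically locally finite as a $D_{n}$-representation and Lemma \ref{lem}, in the form established for finite groups in its proof, applies: since $V$ is infinite-dimensional, $V^{D_{n}}$ is infinite-dimensional. Thus we have an infinite nested family of infinite-dimensional closed subspaces whose intersection we want to be non-zero. I also record the constraint coming from \eqref{1}: if $u\in V^{G}$ then $\varphi(x)(Tu)=T\varphi(t^{-1}x)(u)=Tu$, so $V^{G}$ is stable under $T$ and $T^{-1}$; consequently a non-zero fixed vector, if one exists, can be normalized by a power of $t$ to lie in the compact slice $L\setminus tL$.

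The heart of the argument is the passage from the finite groups $D_{n}$ to $G$, and this is where Ascoli's theorem enters. The difficulty is that a nested family of non-zero, even infinite-dimensional, closed subspaces of $V$ can have trivial intersection, so infinite-dimensionality of the $V^{D_{n}}$ is not by itself enough; one must produce non-zero fixed vectors that remain trapped in a single compact set as $n\to\infty$. The plan is to choose $0\neq w_{n}\in V^{D_{n}}$ and renormalize by a power of $t$ to $u_{n}\in L\setminus tL$; by the equivariance relation $u_{n}$ is then fixed by a translate $t^{j_{n}}D_{n}$ of $D_{n}$. Using the continuity of $\varphi$ to produce a compact open subgroup $K\leq G$ whose image $\varphi(K)$ preserves the lattice filtration of $V$, one obtains equicontinuity of the relevant families of automorphisms; Ascoli's theorem then supplies the compactness needed to keep the normalizing exponents $j_{n}$ bounded, so that the groups $t^{j_{n}}D_{n}$ still exhaust a dense subgroup of $G$. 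Since $L\setminus tL$ is compact and avoids $0$, the sequence $(u_{n})$ has a convergent subsequence; its limit $u$ is non-zero and, by closedness of each $V^{D_{m}}$, lies in every $V^{D_{m}}$, hence in $V^{G}$.

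The main obstacle is precisely this last compactness step: controlling the location of the fixed vectors $u_{n}$ so that they do not escape to infinite valuation. Infinite-dimensionality of $V^{D_{n}}$ guarantees non-zero fixed vectors but gives no a priori bound on their valuations, and the fixed spaces $V^{D_{n}}$ are not themselves $T$-stable, only $V^{G}$ is, so one cannot simply rescale. The combination of the dilation symmetry coming from \eqref{1} with an equicontinuity estimate fed into Ascoli's theorem is what I expect to resolve this, turning the abstract infinite-dimensionality supplied by Lemma \ref{lem} into a genuine non-zero vector fixed by all of $\Fpt$.
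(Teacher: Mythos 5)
Your reductions are correct as far as they go: each $D_{n}$ is finite elementary abelian, $\Fpt^{d}$ is automatically locally finite over $\F_{p}[D_{n}]$, so the finite case of Lemma \ref{lem} makes every $V^{D_{n}}$ infinite-dimensional; density of $\F_{p}[t,t^{-1}]$ and continuity of the orbit maps give $V^{G}=\bigcap_{n}V^{D_{n}}$; and the rescaling identity $\varphi(x)(t^{j}w)=t^{j}\varphi(t^{-j}x)(w)$ shows that $t^{j}w$ is fixed by $t^{j}D_{n}$ whenever $w$ is fixed by $D_{n}$. You have also correctly identified the crux: one needs non-zero $D_{n}$-fixed vectors whose valuations stay in a window growing slower than $n$, so that after normalizing into $\Fptt^{d}\backslash t\Fptt^{d}$ the groups $t^{j_{n}}D_{n}$ still swallow every fixed power of $t$. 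But the step that would close the proof --- ``Ascoli's theorem then supplies the compactness needed to keep the normalizing exponents $j_{n}$ bounded'' --- is asserted, not proved, and your last sentence concedes this (``is what I expect to resolve this''). Equicontinuity of the compact family $\varphi(\Fptt)$ only says that some neighbourhood $U$ of $0$ satisfies $\varphi(\Fptt)(U)\subset\Fptt^{d}$; it constrains how small neighbourhoods are distorted, but it places no constraint whatsoever on where the fixed subspaces $V^{D_{n}}$ sit in the valuation filtration. Infinite-dimensionality cannot substitute for such a constraint: a subspace like $t^{-n-1}\F_{p}[t^{-1}]\oplus t^{n+1}\Fptt$ inside $\Fpt$ is closed, infinite-dimensional, and contains no non-zero vector of valuation in $[-n,n]$, so nothing you have established forbids every $V^{D_{2n}}$ from avoiding exactly the window you need. (Your auxiliary claim that $\varphi(K)$ ``preserves the lattice filtration'' for some compact open $K$ is likewise unjustified: equicontinuity does not make the standard lattice invariant.)

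This missing step is precisely where the paper's real work lies, and it is overcome by a mechanism your outline does not contain: an invariant anchor. The paper first builds the maximal $G$-invariant subgroup $M\subset\Fptt^{d}$ and proves it is non-trivial (Ascoli gives open $G_{\ell}$-invariant subgroups $M_{\ell}\subset\Fptt^{d}$; relation (\ref{1}) forces each $M_{\ell}$ to meet the compact slice $\Fptt^{d}\backslash t\Fptt^{d}$; the nested intersection is then non-empty), and satisfies $tM\subset M$, so that $A=\bigcup_{n}t^{-n}M$ is a $G$-stable $\Fpt$-subspace in which $M$ is open. Lemma \ref{lem} is then applied not to finite subgroups acting on $\Fpt^{d}$, but to the \emph{infinite} group $G$ acting on the discrete locally finite quotient $A/M$, where the self-similarity $(A/M)/(t^{-n}M/M)\cong A/M$ coming from (\ref{1}) kills the second branch of the dichotomy. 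Fixed points modulo $M$ lying outside $t^{-n}M$, once rescaled by $t^{k_{n}}$ with $k_{n}>n$, land in the compact set $M\backslash tM$ and are \emph{approximately} fixed, with error in $t^{k_{n}}M$; an accumulation point is then genuinely fixed. In other words, the invariant subgroup $M$ is what converts ``fixed modulo $M$ at depth $n$'' into ``fixed up to $t^{k_{n}}M$ inside a fixed compact set,'' which is exactly the control your $u_{n}$ lack. Without constructing such an invariant structure (or otherwise bounding the valuations of $D_{n}$-fixed vectors), your compactness argument produces a limit point that need not be fixed by anything more than a bounded piece of $\Fpt$, so the proposal as it stands has a genuine gap at its central step.
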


\begin{proof}
    The proof will be done in two steps. We will first construct a nested family of invariant subgroups, and then we will use it to locate orbits of increasingly small diameter. We will use the following notation: $G=\varphi(\Fpt)$, $G_{\ell}=\varphi(t^{-\ell}\Fptt)$ for all $\ell\geq0$, $B=\Fptt^{d}$, and $S=B\backslash tB$.

    \paragraph{Step 1.}  Let $M$ be the unique maximal $G$-invariant subgroup contained in $B$. It can be defined as the group generated by all $G$-invariant subgroups in $B$. We will prove the following: $M$ is non-trivial, and $A:=\bigcup_{n\geq0}t^{-n}M$ is a $G$-invariant $\Fpt$-subspace of $\Fpt^{d}$ in which $M$ is open.

    Since $\varphi$ is continuous, $G_{\ell}$ is compact, and so by Ascoli's theorem (\cite[Theorem 47.1]{M14}) it is an equicontinous family of automorphisms of $\Fpt^{d}$. In particular, there exists a neighbourhood of $0$ whose $G_{\ell}$-orbit is contained in $B$. The subgroup generated by this orbit is therefore open, $G_{\ell}$-invariant, and contained in $B$. Let $M_{\ell}$ be the largest $G_{\ell}$-invariant subgroup in $B$. By the previous argument $M_{\ell}$ contains an open subgroup and thus it is itself open. By property (\ref{1}), we have
    \[\forall x\in t^{-\ell}\Fptt,\quad \varphi(x)(t^{-1}M_{\ell})=t^{-1}\varphi(tx)(M_{\ell})=t^{-1}M_{\ell}\]
    and so $t^{-1}M_{\ell}$ is also $G_{\ell}$-invariant. This means that $M_{\ell}$ and $S$ have a non-empty intersection, for otherwise the subgroup $M_{\ell}+t^{-1}M_{\ell}$, which is $G_{\ell}$-invariant and strictly larger that $M_{\ell}$, would be contained in $B$. Therefore, the decreasing sequence $(M_{\ell}\cap S)_{\ell\geq0}$ consists of non-empty compact sets, and hence its intersection is non-empty. We have proven that
    \[M:=\bigcap_{\ell\geq0}M_{\ell}\]
    is a non-trivial subgroup of $\Fpt^{d}$. It is in fact the largest $G$-invariant subgroup contained in $B$, due to the maximality of each $M_{\ell}$. Again using (\ref{1}), we see that $tM$ is $G$-invariant and so $tM\subset M$. As an intersection of closed subgroups, $M$ is closed. We deduce that $M$ is an $\Fptt$-module and thus
    \[A:=\bigcup_{n\geq0}t^{-n}M\]
    is a $G$-invariant $\Fpt$-subspace of $\Fpt^{d}$. In particular, $A$ is closed in $\Fpt^{d}$ and therefore it is a Baire space. As such, some $t^{-n}M$, or equivalently all of them, are open in $A$.

    \paragraph{Step 2.} For all $n\geq0$, let $V_{n}:=t^{-n}M/M$. This quotient is an $\F_{p}$-vector space of finite dimension endowed with a linear $G$-action. This means that $V:=A/M=\bigcup_{n\geq0}V_{n}$ satisfies the hypotheses of Lemma \ref{lem}. Let us show that $V$ has infinitely many $G$-fixed points. Suppose otherwise: $V^{G}$ would then be contained in $V_{n}$ for some $n\geq0$. On the other hand, by Lemma \ref{lem} $(V/V^{G})^{G}$ would be infinite: since $V/V^{G}$ surjects $G$-equivariantly onto $V/V_{n}$ with finite kernel, $(V/V_{n})^{G}$ would also be infinite. We claim this is absurd because $V^{G}$ and $(V/V_{n})^{G}$ are in bijection with one another. Indeed, we way choose a section $V/V_{n}=A/t^{-n}M\rightarrow A$ and compose it with $t^{n}\cdot\mathrm{id}:A\rightarrow A$ followed by $A\rightarrow V$. This defines an isomorphism $V/V_{n}\rightarrow V$ which by property (\ref{1}) preserves $G$-fixed points.

    We have shown that $V^{G}$ is infinite. In particular, for all $n\geq0$, $V^{G}$ is not contained in $V_{n}$: we can find some $\overline{v_{n}}\in (A/M)^{G}$ that lifts to $v_{n}\in A\backslash t^{-n}M$. Let $k_{n}> n$ be the unique integer such that $w_{n}:=t^{k_{n}}v_{n}\in M\backslash tM$. For all $x\in \Fpt$, we have
    \[\varphi(x)(w_{n})-w_{n}=t^{k_{n}}\left(\varphi(t^{-k_{n}}x)(v_{n})-v_{n}\right)\in t^{k_{n}}M.\]
    Since $M\backslash tM$ is compact, the family $(w_{n})_{n\geq0}$ has an accumulation point $w$. For all $x\in\Fpt$ and infinitely many $n$, we have $\varphi(x)(w)-w\in t^{k_{n}}M$, so $\varphi(x)(w)=w$: we have constructed a non-zero $G$-fixed point in $A$.
\end{proof}

\begin{remark}
    Glöckner and Willis state Theorem \ref{theor} in a slightly different way: they endow $\mathrm{Aut}(\Fpt^{d})$ with the finer Braconnier topology, and assume that $\varphi$ is continuous with respect to it. This assumption is in fact no stronger than ours (see \cite[Lemma 9.13]{S06}). In light of this, Step 1 of the proof above may be started as follows. The subgroup $\{g\in G_{\ell}\;|\; g\cdot B=B\}$ is open in $G_{\ell}$ for the Braconnier topology, and therefore it is of finite index. Thus, the intersection $\bigcap_{g\in G_{\ell}}g\cdot B$ is finite, yielding an open $G_{\ell}$-invariant subgroup in $B$.
\end{remark}

\begin{remark}
    Let $(G_{1},\alpha_{1})$ and $(G_{2},\alpha_{2})$ be two tdlc contraction groups, equipped with their respective contracting automorphisms. A continuous action $G_{1}\curvearrowright G_{2}$ may be called equivariant if $\alpha_{1}(x)\cdot\alpha_{2}(y)=\alpha_{2}(x\cdot y)$ for all $x\in G_{1}$, $y\in G_{2}$. Then the argument of Step 1 can be applied in this more general setting: there exists a compact, non-trivial subgroup $H$ of $G_{2}$ such that $\alpha_{2}(H)\subset H$ and $G_{1}\cdot H=H$.
\end{remark}

\bibliographystyle{amsalpha}
\bibliography{bib}

\end{document}